\documentclass[a4paper, 12pt]{amsart}

\usepackage{amsmath,amsfonts,amssymb,amscd}
\usepackage{verbatim}
\usepackage{stmaryrd}
\usepackage{enumitem}
\usepackage{hyperref,mdwlist}
\usepackage[english]{babel}
\usepackage{latexsym}
\usepackage{amsopn}
\usepackage{mathrsfs}
\usepackage{float}
\usepackage{fullpage}
\usepackage{tikz}

\usepackage{graphicx}
\usepackage{tikz-cd}
\usepackage{csquotes}
\usepackage[backend=biber,style=alphabetic,maxbibnames=99]{biblatex}
\usepackage[T1]{fontenc}
\usepackage[utf8]{inputenc}
\usepackage{amssymb,amsthm,mathtools}
\usepackage{microtype}
\hypersetup{hidelinks}
\usepackage{cleveref}

\newtheorem{theorem}{Theorem}[section]
\newtheorem{proposition}[theorem]{Proposition}
\newtheorem{lemma}[theorem]{Lemma}
\newtheorem{corollary}[theorem]{Corollary}
\newtheorem{remark}[theorem]{Remark}
\newtheorem{example}[theorem]{Example}

\newcommand{\FFLV}{\mathrm{FFLV}}
\newcommand{\Mak}{\mathrm{Mak}}
\newcommand{\Str}{\mathrm{Str}}
\newcommand{\ZZ}{\mathbb Z}
\newcommand{\RR}{\mathbb R}

\newcommand{\ww}{\mathbf w}

\newcommand{\eps}{\varepsilon}

\title[Makhlin polytopes and Demazure string polytopes]{Makhlin polytopes are Demazure string polytopes}
\author{Ghislain Fourier}
\address{Chair of Algebra and Representation Theory, RWTH Aachen University, Pontdriesch 10--16, 52062 Aachen, Germany}
\email{fourier@art.rwth-aachen.de}

\subjclass[2020]{17B10, 17B37, 52B20, 14M25}
\keywords{string polytope, FFLV polytope, type B, Demazure module, PBW degeneration}

\begin{document}

\begin{abstract}
We show that Makhlin's polytopes in type $B_n$ are unimodularly equivalent to string polytopes for Demazure modules in type $B_{2n-1}$. The proof passes through type $C$, combining Makhlin's diagonal comparison with symplectic FFLV polytopes and the $B/C$ similarity for string cones.
\end{abstract}

\maketitle

\section{Introduction}

Let $\mathfrak{g}$ be a finite-dimensional semisimple complex Lie algebra with fixed triangular decomposition $\mathfrak{g} = \mathfrak{n}^+ \oplus \mathfrak{h} \oplus \mathfrak{n}^-$. The PBW filtration on the universal enveloping algebra of its negative part induces a filtration on highest weight modules. This leads naturally to degenerations of flag varieties. In type $A$, these varieties were introduced and studied by Feigin \cite{Fei11,Fei12}; the symplectic version was considered by Feigin, Finkelberg, and Littelmann \cite{FFL14}. 
A first surprising link with Schubert geometry was established by Cerulli Irelli and Lanini \cite{CL15}: the PBW degenerate flag varieties in types $A$ and $C$ are Schubert varieties in partial flag varieties of the same type and higher rank. A representation-theoretic explanation was given by the same authors together with Littelmann \cite{CILL16}, realizing the associated graded modules as higher-rank Demazure modules. 
For PBW-degenerated Demazure modules and related monomial bases, see also \cite{Fou16,BFK23}. 
The type $A$ picture was subsequently enlarged to linear degenerations and partially degenerate Lie algebras by Cerulli Irelli, Fang, Feigin, Reineke, and the author \cite{CIFFR17}. 
More recently, Enugandla, Fang, Steinert and the author used Dynkin abelianisations to extend the Demazure module and Schubert variety realization to a large class of degenerations in all classical types \cite{EFFS24}.

On the combinatorial side, the PBW bases in types $A$ and $C$ are parametrized by the FFLV polytopes, defined through Dyck path combinatorics (symplectic Dyck paths in type $C$) by Feigin, Littelmann and the author \cite{FFL11A,FFL11C}. Cleusters, Lerner and the author \cite{CFL24} provided the combinatorial shadow of the module isomorphism and the isomorphism of varieties: the FFLV polytopes are unimodularly equivalent to string polytopes of the higher-rank Demazure modules appearing above. For type $B$, an ordinary abelian FFLV model is not available. Makhlin constructed instead a weighted PBW filtration and FFLV-type polytopes whose lattice points parametrize monomial bases \cite{Mak19}. Their defining inequalities are again indexed by Dyck paths, but the normalization differs from the symplectic case. For other polyhedral constructions related to PBW bases in type $B$, see \cite{BK19,MM25}.

The aim of this note is to obtain the combinatorial shadow in type $B$. We do not repeat the type $A$ and $C$ analysis in type $B$. Instead, we compose three existing maps: Makhlin's diagonal map $2D_n^{-1}$ from the type $B$ polytope to the corresponding symplectic FFLV polytope \cite[Remark~2.3]{Mak19}, the affine unimodular map $T_{n,\mu}$ from \cite{CFL24}, and the inverse $B/C$ similarity map $\Gamma_n^{-1}$ from Cho, Fujita, and Lee \cite{CFL25}. The only additional points are to verify that the latter also respects the highest-weight inequalities and that the resulting composition is integral. The construction is summarized in the following diagram.

\begin{figure}[ht]
\centering
\begin{tikzcd}[column sep=huge,row sep=large]
P_{\Mak}^{B_n}(\lambda)
  \arrow[r,"2D_n^{-1}"]
  \arrow[d,dashed,"\Phi_{n,\lambda}"']
&
P_{\FFLV}^{C_n}(\mu)
  \arrow[d,"T_{n,\mu}"]
\\
Q_{\ww_n}^{B_{2n-1}}(\widetilde\lambda)
&
Q_{\ww_n}^{C_{2n-1}}(\widetilde\mu)
  \arrow[l,"\Gamma_n^{-1}"']
\end{tikzcd}
\caption{The three maps used in the proof.}
\label{fig:three-maps}
\end{figure}

Let $\ww_n$ be the reduced word used in the type $C$ construction of \cite{CFL24}; it is recalled explicitly in Section~\ref{sec:preliminaries}. The main result is the following.

\begin{theorem}\label{thm:main}
Let $\lambda=\sum_{i=1}^n a_i\omega_i$ be a dominant integral weight of type $B_n$ and set
\[
           \widetilde\lambda=\sum_{i=1}^n a_i\widetilde\omega_{2i-1}
\]
in type $B_{2n-1}$. Then Makhlin's polytope $P_{\Mak}^{B_n}(\lambda)$ is affinely unimodularly equivalent to the Demazure string polytope
\[
             Q_{\ww_n}^{B_{2n-1}}(\widetilde\lambda).
\]
The equivalence can be chosen compatibly with addition of dominant weights.
\end{theorem}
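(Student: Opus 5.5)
The plan is to build $\Phi_{n,\lambda}$ as the composite $\Gamma_n^{-1}\circ T_{n,\mu}\circ 2D_n^{-1}$ from Figure~\ref{fig:three-maps}. Each of the three factors is a bijection between the relevant polytopes, so the composite is one as well; what remains is to show that it is affine and unimodular on the ambient lattices.

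\textbf{Step 1: Makhlin to FFLV.} Set $\mu=\sum_{i=1}^n a_i\omega_i$ as a type $C_n$ weight, with the normalisation chosen so that \cite[Remark~2.3]{Mak19} applies. That remark says $2D_n^{-1}$ maps $P_{\Mak}^{B_n}(\lambda)$ bijectively onto $P_{\FFLV}^{C_n}(\mu)$. I expect $D_n$ to be a diagonal matrix with entries $1$ and $2$, so that $2D_n^{-1}$ rescales by $2$ exactly the coordinates indexed by the short-root-type positions. This map is linear but \emph{not} unimodular, and the rest of the argument has to absorb that defect.

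\textbf{Step 2: FFLV to type $C$ Demazure string polytope.} By \cite{CFL24}, $T_{n,\mu}$ is an affine unimodular bijection from $P_{\FFLV}^{C_n}(\mu)$ onto $Q_{\ww_n}^{C_{2n-1}}(\widetilde\mu)$, where $\widetilde\mu=\sum a_i\widetilde\omega_{2i-1}$. Its translation part is linear in $\mu$.

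\textbf{Step 3: $B/C$ similarity.} Cho--Fujita--Lee \cite{CFL25} give a linear map $\Gamma_n$ sending the string cone of type $B_{2n-1}$ for a reduced word onto the string cone of type $C_{2n-1}$ for the corresponding word. Concretely, $\Gamma_n$ multiplies certain coordinates by $2$ or $1/2$ according to whether the letter is the exceptional node. I need more than the cone statement: the highest-weight inequalities must be matched as well, so that $\Gamma_n$ restricts to a bijection $Q_{\ww_n}^{B_{2n-1}}(\widetilde\lambda)\to Q_{\ww_n}^{C_{2n-1}}(\widetilde\mu)$. I would check this using the Littelmann/Berenstein--Zelevinsky form of the $\lambda$-inequalities, $t_k\le \langle\lambda-\sum_{j>k}t_j\alpha_{i_j},\alpha_{i_k}^\vee\rangle$. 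Under the $B\leftrightarrow C$ Langlands duality, the Cartan integers transpose. Rescaling the coordinate attached to the special simple root then turns one system into the other, provided $\widetilde\lambda$ and $\widetilde\mu$ correspond. Since only odd fundamental weights occur and the special node is $2n-1$, the coefficients $a_i$ match directly, possibly after a factor $2$ at $\widetilde\omega_{2n-1}$; I will fix the normalisation of $\mu$ in Step~1 so that this works. For the Demazure version, the polytope is cut out by the string cone for the prefix $\ww_n$ together with these inequalities, so the same check applies.

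\textbf{Step 4: integrality and unimodularity.} This is the main obstacle. The composite is affine with linear part $\Gamma_n^{-1}L\,2D_n^{-1}$, where $L$ is unimodular. Neither scaling factor is unimodular by itself, so I must show that the factors of $2$ introduced by $2D_n^{-1}$ are exactly cancelled by the factors of $1/2$ in $\Gamma_n^{-1}$. I would try to do this by tracking which coordinates $T_{n,\mu}$ carries the rescaled FFLV coordinates to. Explicitly, $T_{n,\mu}$ should be a sum of coordinates along Dyck-path blocks that sends the ``long'' FFLV positions to the positions of the special letter in $\ww_n$. If so, the linear part becomes $\pm$ a triangular integer matrix with unit diagonal, hence lies in $GL(\ZZ)$. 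The fallback is to write out the matrix product for the explicit $\ww_n$ and verify integrality of both it and its inverse, checking separately that the translation vector is integral.

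\textbf{Additivity.} All three maps depend affinely and additively on the weight: the translation parts are linear in $\lambda$, and $\lambda\mapsto\widetilde\lambda$ is additive. The Minkowski property $P(\lambda)+P(\lambda')=P(\lambda+\lambda')$, known for FFLV polytopes and for string polytopes, is therefore transported along the composite. This gives compatibility with addition of dominant weights.
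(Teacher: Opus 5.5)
Your skeleton is the paper's: the same composite $\Gamma_n^{-1}\circ T_{n,\mu}\circ 2D_n^{-1}$. Your Step~3 check (rescale the string coordinates and use the transposed Cartan integers, $a^C_{ij}=\frac{d_i}{d_j}a^B_{ij}$, in the Littelmann inequalities, then pass to the prefix) is how the paper extends \cite{CFL25} to the highest-weight cuts.

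However, the normalisations are forced, not chosen, and the ones you guess are wrong.
\begin{itemize}
\item Makhlin's comparison through $2D_n^{-1}$ doubles both sides of every Dyck-path inequality. This forces $\mu=2\lambda$ in $\eps$-coordinates, i.e.\ $\mu=2\sum_{i<n}a_i\omega_i+a_n\omega_n$, not $\sum_i a_i\omega_i$.
\item On the string side, with $m=2n-1$, $\Gamma_n$ must multiply the $k$-th coordinate by $d_{i_k}$, where $d_i=2$ for $i<m$ and $d_m=1$. It then carries $Q^{B_m}_{\ww_n}(\widetilde\lambda)$ onto $Q^{C_m}_{\ww_n}(\widetilde\mu)$ with $\widetilde\mu=2\sum_{i<n}a_i\widetilde\omega_{2i-1}+a_n\widetilde\omega_m$.
\item So the factor $2$ sits at the non-special nodes, not at $\widetilde\omega_{2n-1}$, and this $\widetilde\mu$ is exactly the lift of the forced $\mu$.
\item If you rescale only the special coordinates instead, you are using $\frac12\Gamma_n$. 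The composite then lands in $2Q^{B_m}_{\ww_n}(\widetilde\lambda)$ and has determinant $\pm2^N$.
\end{itemize}

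The substantive gap is Step~4. You correctly flag it as the main obstacle but do not resolve it.

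\textbf{Linear part.} You anticipate the label matching. In the common labelling of \cite[Notation~4.10]{CFL24}, the positions of the letter $m$ in $\ww_n$ are exactly the long-root coordinates $e_{a,\bar a}$. Hence $\Gamma_n=2D_n^{-1}$, and the linear part is the conjugate $U_n=D_nX_nD_n^{-1}$. This gives $\det U_n=\pm1$, but not integrality. Conjugation divides by $2$ every entry of $X_n$ lying in a column $e_{a,\bar a}$ and in a row outside $\{e_{c,\bar c}\}$. Integrality holds because
\[
X_n(e_{a,\bar a})=-\Bigl(e_{a,\bar a}+2\sum_{c<a}e_{c,\bar a}\Bigr),
\]
so exactly those entries are even. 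Note that $X_n$ does not preserve the span of the $e_{c,\bar c}$ (the $e_{c,\bar a}$ with $c<a$ lie outside it). So matching ``long positions'' with ``special-letter positions'' does not by itself give an integral matrix, let alone a triangular unimodular one.

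\textbf{Translation part.} The translation is
\[
\tfrac12D_nt_{n,\mu}=\sum_{i<n}a_iD_nt_{n,\omega_i}+\tfrac{a_n}{2}D_nt_{n,\omega_n}.
\]
For odd $a_n$ this is integral only because $t_{n,\omega_n}$ has entry $1$ exactly on the coordinates $e_{a,\bar a}$ and entries in $\{0,2\}$ elsewhere, so $D_nt_{n,\omega_n}\in2\ZZ^N$.

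Neither parity fact appears in your proposal. A case-by-case computation for explicit $\ww_n$ cannot cover all $n$ unless it extracts exactly these structural facts.

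Once they are in place, additivity needs only that $U_n$ is independent of $\lambda$ and that the translation is linear in $\lambda$. The Minkowski-property remark is not needed.
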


The diagram also explains the main point of the proof. The map from Makhlin's polytope to the type $C$ FFLV polytope and the $B/C$ similarity on the string side contain the same diagonal rescaling in opposite directions. Hence these non-unimodular factors cancel in the composition. It remains to check that the resulting conjugate of the type $C$ unimodular map, as well as the translation part, is integral. This is done in Sections~\ref{sec:similarity} and \ref{sec:proof}.

The compatibility with addition gives an isomorphism of the corresponding multigraded lattice-point semigroups. In particular, the additional component $2\omega_n$ occurring in Makhlin's Minkowski-type argument \cite[Lemma~5.1]{Mak19} is transported to the string-polytope side.

A PBW-type polyhedral model in type $G_2$ was constructed by Gornitskii \cite{Gor15}. Computations in \cite{CFL24} indicate that Gornitskii's polytope is not carried to the corresponding Demazure string polytope. In a forthcoming preprint, we consider the reverse direction: starting from a Demazure string basis, we pull the basis back to the PBW side via \cite{Enu22} and study the resulting polytope and its lattice-point semigroup.

The paper is organized as follows. In Section~\ref{sec:preliminaries} we recall the three polyhedral constructions. Section~\ref{sec:similarity} extends the $B/C$ similarity of \cite{CFL25} from string cones to the corresponding highest-weight cuts and compares the two diagonal maps. Theorem~\ref{thm:main} is proved in Section~\ref{sec:proof}. Section~\ref{sec:semigroup} contains the consequences for lattice points and semigroups.

\medskip
\noindent\textbf{Acknowledgements.} The author gratefully acknowledges financial support by the DFG -- Project-ID 286237555 -- TRR 195. ChatGPT (OpenAI) was used for language editing and for computations in examples.

\section{Preliminaries}\label{sec:preliminaries}

We recall the notation needed below; see \cite{Mak19,CFL24,CFL25} for details.

\subsection{The common Demazure word}

Let $n\geq2$ and set
\[
                           m=2n-1,\qquad N=n^2.
\]
Define
\begin{equation}\label{eq:intro-word}
 \ww_n=\tau_m\tau_{m-1}\cdots\tau_{n+1}
              \sigma_n\sigma_{n-1}\cdots\sigma_1,
\end{equation}
where
\[
 \tau_j=s_js_{j+1}\cdots s_m,
 \qquad
 \sigma_j=s_js_{j+1}\cdots s_{2j-1}.
\]
This is the reduced word from \cite{CFL24}. Since $W(B_m)=W(C_m)$ as Coxeter groups, it is reduced in both types and has length $N$. We use tildes for the fundamental weights in rank $m$.

For $X=B,C$, let $\eta$ be dominant and let $\mathbf i=(i_1,\ldots,i_L)$ be reduced. Choose a reduced extension
\[
              \widehat{\mathbf i}=(i_1,\ldots,i_L,\ldots,i_M)
\]
to the longest element. 
The Demazure string polytope $Q_{\mathbf i}^{X}(\eta)$ is the projection of
$Q_{\widehat{\mathbf i}}^{X}(\eta)$ to the first
$L$ coordinates \cite{Cal02}.
Its lattice points parametrize the Demazure crystal.
We use the standard lattice in string coordinates.

For a reduced word of the longest element, the highest-weight inequalities can be written in the form
\begin{equation}\label{eq:littelmann}
 x_k\leq \eta_{i_k}-\sum_{j>k}a_{i_k,i_j}x_j,
 \qquad
 \eta_i=\langle\eta,\alpha_i^\vee\rangle,
\end{equation}
where $A=(a_{ij})$ is the Cartan matrix; see \cite{Lit98,BZ01}. Throughout, we use the convention
\[
        a_{ij}=\langle\alpha_j,\alpha_i^\vee\rangle .
\]

\subsection{The type $C$ equivalence}

We use the following result from \cite{CFL24}. Let
\[
                  \mu=\sum_{i=1}^n b_i\omega_i^{C_n},
 \qquad
                  \widetilde\mu=\sum_{i=1}^n b_i\widetilde\omega_{2i-1}^{C_m}.
\]
Denote the type $C_n$ FFLV polytope by $P_{\FFLV}^{C_n}(\mu)$.

\begin{theorem}[{\cite[Theorem~3.1]{CFL24}}]\label{thm:CFL}
There exists an affine map
\[
                    T_{n,\mu}(x)=X_nx+t_{n,\mu}
\]
with $X_n\in GL_N(\ZZ)$ and $t_{n,\mu}\in\ZZ^N$, linear in $\mu$, such that
\[
 T_{n,\mu}\bigl(P_{\FFLV}^{C_n}(\mu)\bigr)
      =Q_{\ww_n}^{C_m}(\widetilde\mu).
\]
\end{theorem}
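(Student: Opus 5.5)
Theorem~\ref{thm:CFL} is quoted from \cite{CFL24}. If I had to prove it myself, I would build $T_{n,\mu}$ explicitly in coordinates and then verify it facet by facet.

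\emph{Coordinates and the linear part.} Index the coordinates of $P_{\FFLV}^{C_n}(\mu)$ by the $N=n^2$ positive roots of $C_n$, written $\alpha_{i,j}$ and $\alpha_{i,\overline{j}}$. Index the string coordinates $x_1,\dots,x_N$ by the positions of the letters in $\ww_n$. The factorisation $\ww_n=\tau_m\cdots\tau_{n+1}\sigma_n\cdots\sigma_1$ cuts the letters into $n$ blocks, and these blocks should correspond to the rows $i$ of the FFLV triangle. The $k$-th letter of a block then corresponds to the $k$-th root in that row, read along the symplectic order $1<\dots<n<\overline n<\dots<\overline 1$. I would take $X_n$ to be a ``partial sum along diagonals'' map: each string coordinate is a signed sum of FFLV coordinates along one hook or diagonal. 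Such a map is triangular with $\pm1$ on the diagonal, so it lies in $GL_N(\ZZ)$ automatically. The translation $t_{n,\mu}$ is then forced by matching the highest-weight inequalities \eqref{eq:littelmann}. Since $\eta_{i_k}$ enters \eqref{eq:littelmann} linearly, $t_{n,\mu}$ is linear in $\mu$ and has entries that are integer combinations of the $b_i$.

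\emph{Describing the Demazure string polytope.} This is the step I expect to be the main obstacle. The polytope $Q_{\ww_n}^{C_m}(\widetilde\mu)$ is defined as a projection, so I need an explicit inequality description of it. I would first compute the string cone of $\ww_n$ via Littelmann's description \cite{Lit98} or the Berenstein--Zelevinsky trails \cite{BZ01}. The point is that the letters inside each $\tau_j$ and $\sigma_j$ are consecutive simple reflections, which makes the trails essentially paths in a grid. Next I would show that, because $\widetilde\mu$ is supported on odd fundamental weights and $\ww_n$ is a prefix of a suitable reduced word for $w_0$, only the highest-weight inequalities attached to the first $N$ letters survive the projection. I would prove this survival claim by a crystal argument: the Demazure crystal $B_{w}(\widetilde\mu)$ consists of the elements $\tilde f_{i_1}^{x_1}\cdots\tilde f_{i_N}^{x_N}b_{\widetilde\mu}$ that are nonzero.

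\emph{Matching and closing.} With both descriptions in hand, I would check that $T_{n,\mu}$ sends each symplectic Dyck path inequality to a string-cone or highest-weight inequality, and conversely. This gives $T_{n,\mu}(P_{\FFLV}^{C_n}(\mu))\subseteq Q_{\ww_n}^{C_m}(\widetilde\mu)$ together with the reverse inclusion.

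If the converse direction is messy, I would instead close with a counting argument, using three inputs:
\begin{itemize}
\item the lattice points of $P_{\FFLV}^{C_n}(\mu)$ number $\dim V(\mu)$ \cite{FFL11C};
\item the lattice points of $Q_{\ww_n}^{C_m}(\widetilde\mu)$ number $\dim V_{\ww_n}(\widetilde\mu)$, and this equals $\dim V(\mu)$ by \cite{CILL16};
\item both families are Minkowski additive in $\mu$, for FFLV by \cite{FFL11C} and for Demazure crystals by tensor products.
\end{itemize}
The one-sided inclusion, applied to $k\mu$ for all $k$, gives injective maps on lattice points. Equal counts for every $k$ then force the Ehrhart functions to agree, so the two polytopes have equal volume, and the inclusion of polytopes becomes an equality.
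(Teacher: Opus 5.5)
This statement is not proved in the paper; it is imported from \cite{CFL24}. There $X_n$ is written down explicitly \cite[Definition~4.1]{CFL24}, its unimodularity is a separate result \cite[Proposition~4.4]{CFL24}, and $t_{n,\mu}$ is given explicitly \cite[Definition~4.5]{CFL24}.

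Measured against that, your proposal has a genuine gap: it never constructs $T_{n,\mu}$. Both of your endgames rest on an inclusion between $T_{n,\mu}(P_{\FFLV}^{C_n}(\mu))$ and $Q_{\ww_n}^{C_m}(\widetilde\mu)$. That inclusion is only announced (``I would check that $T_{n,\mu}$ sends each symplectic Dyck path inequality to a string-cone or highest-weight inequality''), and that check is the theorem itself.

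The concrete guidance you do give does not fit.
\begin{itemize}
\item The factorisation of $\ww_n$ has $2n-1$ blocks $\tau_m,\dots,\tau_{n+1},\sigma_n,\dots,\sigma_1$, of lengths $1,2,\dots,n,\dots,2,1$. The rows of the $C_n$ triangle have lengths $2n-1,2n-3,\dots,1$. So your block-by-row, letter-by-root matching cannot even be set up. In \cite[Notation~4.10]{CFL24}, the $n$ occurrences of the letter $m$ (the last letters of $\tau_m,\dots,\tau_{n+1},\sigma_n$) carry exactly the long-root labels $e_{a,\bar a}$; cf.\ Lemma~\ref{lem:GammaD}.
\item The map that works is not a partial sum along diagonals. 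By \eqref{eq:Xdiag} its long-root columns are $-(e_{a,\bar a}+2\sum_{c<a}e_{c,\bar a})$.
\item The translation you call ``forced'' is itself part of the content; compare its shape in Lemma~\ref{lem:translation}.
\end{itemize}

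You also have no inequality description of $Q_{\ww_n}^{C_m}(\widetilde\mu)$, and either inclusion needs one.
\begin{itemize}
\item The string cone of a reduced extension of $\ww_n$ in type $C_m$ is never computed. That the trails are ``essentially paths in a grid'' is a hope, not an argument.
\item You attribute the passage to the first $N$ letters to the support of $\widetilde\mu$ on odd fundamental weights, but that plays no role there. On $B_w(\eta)$ the string parameters beyond position $N$ vanish for a weight-independent reason: $\tilde e_i^{\max}$ maps $B_w(\eta)$ into $B_{s_iw}(\eta)$ when $s_iw<w$. The odd support instead belongs to the lift used in the comparison with \cite{CILL16}.
\end{itemize}

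Your counting closure is sound once an inclusion is available, provided you check that $\ww_n$ represents the Weyl group element of \cite{CILL16}, so that the dimension identity applies. Three smaller points:
\begin{itemize}
\item Minkowski additivity is not needed. You only use $P_{\FFLV}^{C_n}(k\mu)=kP_{\FFLV}^{C_n}(\mu)$, $Q_{\ww_n}^{C_m}(k\widetilde\mu)=kQ_{\ww_n}^{C_m}(\widetilde\mu)$, and linearity of $t_{n,\mu}$.
\item ``Equal volume'' is vacuous for polytopes that are not full-dimensional; for $\mu=\omega_1$ the dimension is $2n-1<N$.
\item Argue instead that equality of lattice points in all dilates puts every rational point of the larger polytope into the smaller one, and finish by density.
\end{itemize}

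The overall strategy is reasonable. What is missing is exactly what \cite{CFL24} supplies: explicit $X_n$ and $t_{n,\mu}$, and a facet description of the Demazure string polytope against which the Dyck path inequalities can be matched.
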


We use the coordinates $e_{a,b}$ of \cite[Section~4]{CFL24}, with alphabet
\[
 1<2<\cdots<n=\bar n<\overline{n-1}<\cdots<\bar1.
\]
The coordinates
\[
                         e_{a,\bar a},\qquad 1\leq a\leq n,
\]
correspond to the long roots $2\eps_a$ of $C_n$; denote their set by $\Delta_n$. By \cite[Notation~4.10]{CFL24} these labels index simultaneously the FFLV coordinates and the string coordinates of $\ww_n$. 
We use this identification throughout and regard $X_n$, $D_n$ and $\Gamma_n$ as endomorphisms of one and the same space $\RR^N$; in particular the conjugation in \eqref{eq:Uc} below is meaningful.

Let $X_n$ be the linear map from
\cite[Definition~4.1]{CFL24}. Thus
\begin{equation}\label{eq:Xdiag}
 X_n(e_{a,\bar a})
  =-\left(e_{a,\bar a}+2\sum_{c<a}e_{c,\bar a}\right).
\end{equation}
The coefficients of $X_n$ lie in $\{0,-1,-2\}$ and
$|\det X_n|=1$ \cite[Proposition~4.4]{CFL24}.

\begin{lemma}\label{lem:translation}
For $t_{n,\omega_n}$, the coefficients on the coordinates in $\Delta_n$ are equal to $1$ and every other non-zero coefficient is equal to $2$.
\end{lemma}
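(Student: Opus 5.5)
We need to describe the translation vector $t_{n,\omega_n}$ from \cite{CFL24}. Since $T_{n,\mu}$ maps $P_{\FFLV}^{C_n}(\mu)$ onto $Q_{\ww_n}^{C_m}(\widetilde\mu)$, and both polytopes contain a distinguished vertex, the plan is to read off $t_{n,\mu}$ as the image of the origin: $0\in P_{\FFLV}^{C_n}(\mu)$ always. Hence $t_{n,\mu}=T_{n,\mu}(0)$ is a specific vertex of the string polytope. Equivalently, one can use the explicit formula for $t_{n,\mu}$ given in \cite[Definition~4.1 / Section~4]{CFL24}, which is linear in $\mu$. Specialising to $\mu=\omega_n$, i.e.\ $\widetilde\mu=\widetilde\omega_m$, reduces the claim to a coordinate computation.

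\textbf{Step 1: the cleanest route.} I would identify the point $T_{n,\omega_n}(0)$ as the string parametrisation of a specific crystal element. Observe that $P^{C_n}_{\FFLV}(\omega_n)$ is a $0/1$-polytope (all Dyck path sums are bounded by $1$). Its vertex $0$ corresponds to the highest weight vector. Under the bijection of \cite{CFL24}, which intertwines the weight of FFLV points with that of crystal elements up to the Demazure identification, the image of $0$ is the string datum of the lowest-weight element of the Demazure crystal $B_{\ww_n}(\widetilde\omega_m)$. That element is the extremal vector of weight $w\widetilde\omega_m$, where $w$ is the element of $\ww_n$. Its string datum along a reduced word is computed greedily. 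Along $\ww_n$ one applies $f_{i_1}^{\max}$, then $f_{i_2}^{\max}$, and so on, where each exponent is $\langle s_{i_{k+1}}\cdots s_{i_N}\widetilde\omega_m,\alpha_{i_k}^\vee\rangle$. This equals $0$, $1$, or $2$: in type $C_m$, $\langle\beta,\alpha_m^\vee\rangle$ for $\beta$ in the orbit of $\widetilde\omega_m$ is in $\{0,\pm1\}$, while for short simple roots $\alpha_i$ with $i<m$, $\langle \beta,\alpha_i^\vee\rangle\in\{0,\pm1,\pm2\}$ can occur. Hmm — actually the convention for $C$ has $\alpha_m$ long. I would compute the sequence of reflections $\tau_m\cdots\tau_{n+1}\sigma_n\cdots\sigma_1$ applied to $\widetilde\omega_m=\eps_1+\dots+\eps_m$ explicitly, block by block.

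\textbf{Step 2: block computation.} Acting by $\sigma_1,\dots,\sigma_n$ from the right end first, each factor $s_j\cdots s_{2j-1}$ moves signs and positions of $\eps$'s. The exponent at a letter $s_i$ with $i<m$ is $\langle\beta,\eps_i-\eps_{i+1}\rangle$, which lies in $\{0,1,2\}$. It equals $2$ exactly when $\beta$ has $+\eps_i$ and $-\eps_{i+1}$. The exponent at $s_m$ is $\langle\beta,\eps_m\rangle\in\{0,1\}$. I then match letters to labels $e_{a,b}$ via \cite[Notation~4.10]{CFL24} and check that the letters carrying $\Delta_n$-labels $e_{a,\bar a}$ are precisely the occurrences of $s_m$. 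This is plausible since the long roots correspond to the long simple reflection. Then every $\Delta_n$ coordinate is $1$ and all others are $0$ or $2$, as each sign flip is propagated by subsequent transpositions.

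\textbf{Main obstacle.} The delicate point is ensuring two things: first, that $t_{n,\mu}$ really equals this lowest-weight string datum, rather than the translation being defined by a different formula; second, that the $\Delta_n$ coordinates are exactly the $s_m$-positions. If \cite{CFL24} gives $t_{n,\mu}$ explicitly, I would instead verify the claim directly from that formula, cross-checking with \eqref{eq:Xdiag}. Here $X_n(e_{a,\bar a})$ has a $-1$ on $e_{a,\bar a}$ and $-2$ elsewhere. Evaluating the highest-weight inequalities \eqref{eq:littelmann} at $T(0)$, and at the images of the unit points $e_{a,\bar a}\in P_{\FFLV}^{C_n}(\omega_n)$, forces the pattern $1$ versus $2$. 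This is because $T(e_{a,\bar a})=t+X_n e_{a,\bar a}$ must remain nonnegative, so $t$ is at least $1$ on $e_{a,\bar a}$ and at least $2$ on $e_{c,\bar a}$; the upper bounds from \eqref{eq:littelmann} then give equality.
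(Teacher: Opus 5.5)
Your route differs from the paper's, which simply reads $t_{n,\omega_n}$ off \cite[Definition~4.5]{CFL24}. (Definition~4.1 there is $X_n$, not the translation.) As written, however, your decisive step is missing. Theorem~\ref{thm:CFL} only provides an affine unimodular bijection and says nothing about weights. So the claim that $T_{n,\omega_n}(0)$ is the string datum $x^{\mathrm{ext}}$ of the extremal element $b_{w\widetilde\omega_m}$ is unsupported: an affine bijection sends the vertex $0$ to \emph{some} vertex, not necessarily that one. You list exactly this as the open obstacle.

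Your last paragraph does not close it. The condition $T(e_{a,\bar a})\ge 0$ bounds $t$ from below only on $\Delta_n$ and on the $e_{c,\bar a}$, and it also needs $e_{a,\bar a}\in P^{C_n}_{\FFLV}(\omega_n)$, which you do not check. Each inequality in \eqref{eq:littelmann} bounds $t_k$ only in terms of the later coordinates $t_j$, $j>k$. So those inequalities give no numerical upper bound until the later coordinates are pinned down, and you never say how the coordinates outside $\Delta_n\cup\{e_{c,\bar a}\}$ are controlled.

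The gap can be closed with facts already in the paper. Since $X_n$ has entries in $\{0,-1,-2\}$ and FFLV points are nonnegative, $T_{n,\omega_n}(p)\le t$ coordinatewise for every $p$. Hence $t$ dominates every point of $Q^{C_m}_{\ww_n}(\widetilde\omega_m)$, and in particular $t\ge x^{\mathrm{ext}}$. On the other hand, $s_{i_{k+1}}\cdots s_{i_N}\widetilde\omega_m=\widetilde\omega_m-\sum_{j>k}x^{\mathrm{ext}}_j\alpha_{i_j}$. So $x^{\mathrm{ext}}$ satisfies every highest-weight inequality for $\ww_n$ (as written out in Example~\ref{ex:weightsB3}) with equality. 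Downward induction on $k$ then gives $t_k\le x^{\mathrm{ext}}_k$, hence $t=x^{\mathrm{ext}}$.

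Your block computation in Step~2 then becomes unnecessary. Its claim that the exponents at $s_i$, $i<m$, lie in $\{0,1,2\}$ is also inaccurate. The $W$-orbit of $\widetilde\omega_m$ is $\{\pm\eps_1\pm\cdots\pm\eps_m\}$, so the exponent at $s_m$ is $\pm1$, hence $1$. At $s_i$ with $i<m$ it lies in $\{0,\pm2\}$, hence in $\{0,2\}$. Together with $\iota(a,b)=m\iff b=\bar a$ from \cite[Notation~4.10]{CFL24}, this gives the lemma.

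Compared with the paper's one-line citation, this argument uses only the sign pattern of $X_n$ and the labeling, not the explicit shape of the translation vector. It also locates the $2$'s: they sit on the non-$s_m$ letters of $\tau_m\cdots\tau_{n+1}\sigma_n$, with zeros on $\sigma_{n-1}\cdots\sigma_1$.
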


\begin{proof}
For $i=n$, the first case in \cite[Definition~4.5]{CFL24} is empty.
The coefficient is $1$ precisely for $j=\bar\ell$, hence on $\Delta_n$,
and every other non-zero coefficient is $2$.
\end{proof}

\subsection{Makhlin's polytope}

Let
\[
                  \lambda=\sum_{i=1}^n a_i\omega_i^{B_n}.
\]
Let \(P_{\Mak}^{B_n}(\lambda)\) be the polytope defined by Makhlin's inequalities \cite[Section~1 and Remark~2.3]{Mak19}; its integral points form the set \(\Pi_\lambda\). In the orthonormal basis $\eps_1,\ldots,\eps_n$, one has
\[
 \omega_i^{B_n}=\eps_1+\cdots+\eps_i\quad(i<n),
 \qquad
 \omega_n^{B_n}=\frac12(\eps_1+\cdots+\eps_n).
\]
Writing $\lambda=\sum_{i=1}^n\lambda_i\eps_i$, we have
\begin{equation}\label{eq:lambdaeps}
             \lambda_i=\sum_{j=i}^{n-1}a_j+\frac{a_n}{2}.
\end{equation}

We identify the positive roots of types $B_n$ and $C_n$ by
\[
 \eps_i-\eps_j\longleftrightarrow\eps_i-\eps_j,\qquad
 \eps_i+\eps_j\longleftrightarrow\eps_i+\eps_j,\qquad
 \eps_i\longleftrightarrow2\eps_i.
\]
This identifies the triangular coordinates and the corresponding Dyck paths.

Makhlin's inequalities are indexed by these paths: in the inequality attached to a path, every coordinate indexed by a short root of $B_n$ occurs with coefficient $1/2$, and every other coordinate with coefficient $1$ \cite[Section~1]{Mak19}.

Define a diagonal map $D_n$ on the type $C_n$ FFLV coordinates by
\begin{equation}\label{eq:D}
 D_n(e_{a,b})=
 \begin{cases}
 2e_{a,\bar a},&b=\bar a,\\
 e_{a,b},&b\neq\bar a.
 \end{cases}
\end{equation}
Set
\begin{equation}\label{eq:mu}
             \mu=2\sum_{i=1}^{n-1}a_i\omega_i^{C_n}+a_n\omega_n^{C_n}.
\end{equation}
For a Dyck path $\mathbf d$, write $S_B(q,\mathbf d)\leq M_B(\lambda,\mathbf d)$ and $S_C(p,\mathbf d)\leq M_C(\mu,\mathbf d)$ for the corresponding type $B$ and $C$ inequalities.
The following makes the diagonal relation of \cite[Remark~2.3]{Mak19} explicit in our normalization.
\begin{proposition}\label{prop:MakhlinC}
With the above notation,
\[
       P_{\Mak}^{B_n}(\lambda)
            =\frac12D_nP_{\FFLV}^{C_n}(\mu).
\]
\end{proposition}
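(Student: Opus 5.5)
The plan is a direct comparison of the defining inequalities, one Dyck path at a time. Both polytopes lie in the same coordinate space $\RR^N$, indexed by the positive roots under the identification of roots fixed above. Each is cut out by nonnegativity of the coordinates together with one inequality per Dyck path $\mathbf d$. The map $q=\tfrac12D_np$ acts as follows: it sends $p_{a,\bar a}\mapsto q_{a,\bar a}=p_{a,\bar a}$ on the long-root coordinates $\Delta_n$ of $C_n$ (which correspond to the short roots $\eps_a$ of $B_n$), and $p_{a,b}\mapsto q_{a,b}=\tfrac12 p_{a,b}$ on all other coordinates. This map is a linear bijection preserving the nonnegative orthant. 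It therefore suffices to show that, for every Dyck path $\mathbf d$,
\[
S_B\bigl(\tfrac12D_np,\mathbf d\bigr)=\tfrac12 S_C(p,\mathbf d)
\quad\text{and}\quad
M_B(\lambda,\mathbf d)=\tfrac12 M_C(\mu,\mathbf d).
\]
The two polytopes then have the same inequality system after the substitution. We must also check that the families of Dyck paths agree under the identification of triangular coordinates, which holds by the bijection of positive roots.

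\emph{Left-hand sides.} In the symplectic FFLV inequality every coordinate on the path occurs with coefficient $1$. In Makhlin's inequality, short-root coordinates occur with coefficient $\tfrac12$ and the others with coefficient $1$. Substituting $q=\tfrac12D_np$, a coordinate in $\Delta_n$ contributes $\tfrac12 q_{a,\bar a}=\tfrac12p_{a,\bar a}$, and any other coordinate contributes $q_{a,b}=\tfrac12 p_{a,b}$. Hence $S_B(q,\mathbf d)=\tfrac12S_C(p,\mathbf d)$ termwise.

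\emph{Right-hand sides.} This step is the main obstacle, because it depends on the precise normalizations in \cite{Mak19} and \cite{FFL11C}. In \cite{FFL11C} the bound for a symplectic Dyck path from $\alpha_i$ to $\alpha_j$ (or to a long root) is a sum $b_i+\dots+b_j$ of the coefficients of $\mu$ over the relevant index range. Makhlin's bound is the analogous expression in the $a_i$. I would rewrite both bounds in $\eps$-coordinates. The type $C$ bound is a difference or sum of the $\mu_i$ with $\mu_i=\sum_{j\ge i}b_j$. The type $B$ bound is the corresponding expression in the $\lambda_i$ of \eqref{eq:lambdaeps}. With $\mu$ as in \eqref{eq:mu}, we get $b_j=2a_j$ for $j<n$ and $b_n=a_n$, so $\mu_i=2\sum_{j=i}^{n-1}a_j+a_n=2\lambda_i$. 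Both bounds are the same linear functional of the $\eps$-coordinates, so $M_C(\mu,\mathbf d)=2M_B(\lambda,\mathbf d)$. The functional in question is $\langle\cdot,\beta^\vee\rangle$-type data read off from the endpoints of $\mathbf d$.

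I would check this separately for the two kinds of paths, since the normalization is most delicate there. For paths ending at a simple root $\alpha_j$ with $j<n$, the bound involves only $a_i,\dots,a_j$ in type $B$ and $b_i=2a_i,\dots$ in type $C$, giving the factor $2$ immediately. For paths passing through the long root $2\eps_n$ (the symplectic paths ending at $\bar i$), type $C$ gives $b_i+\dots+b_n$. Makhlin's bound is $a_i+\dots+a_{n-1}+\tfrac12a_n$, which is $\lambda_i$, and $2\lambda_i=\mu_i$. Since this is exactly where the $\tfrac12$ in $\omega_n^{B_n}$ enters, this case is the one to verify against \cite[Section~1]{Mak19}. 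Once both identities hold, the two systems coincide and the equality of polytopes follows.
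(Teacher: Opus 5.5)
Your proposal is correct and is essentially the paper's argument. You substitute $p=2D_n^{-1}q$ (equivalently $q=\tfrac12D_np$), compare the path sums termwise to get $S_C(p,\mathbf d)=2S_B(q,\mathbf d)$, and use $\mu=2\lambda$ in $\eps$-coordinates to get $M_C(\mu,\mathbf d)=2M_B(\lambda,\mathbf d)$. Your extra case split on the right-hand sides is harmless, but the second case should be described as the paths ending at a long-root coordinate $e_{j,\bar j}$, since these need not pass through $2\eps_n$.
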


\begin{proof}
Put $p=2D_n^{-1}q$. By the definition in \cite[Section~1]{Mak19}, for every corresponding Dyck path $\mathbf d$ one has
\[
             S_C(p,\mathbf d)=2S_B(q,\mathbf d).
\]
Here $p_\beta=2q_\beta$ off the long-root boundary and $p_{2\eps_a}=q_{\eps_a}$, giving the coefficient $\frac12q_{\eps_a}$ after division by $2$. Moreover, \eqref{eq:lambdaeps} and \eqref{eq:mu} give
\[
             M_C(\mu,\mathbf d)=2M_B(\lambda,\mathbf d).
\]
Thus the inequalities are equivalent.
\end{proof}

\subsection{The spin direction}

In the common $\eps$-coordinates, \eqref{eq:lambdaeps} and \eqref{eq:mu} give
\[
                              \mu=2\lambda.
\]
Thus $\omega_n^{B_n}$ maps to $\omega_n^{C_n}$. The degree $2\omega_n$ in \cite[Lemma~5.1]{Mak19} is a lattice-point phenomenon.

\section{The $B/C$ similarity}\label{sec:similarity}

We use the $B/C$ similarity from \cite{Kas96,CFL25}. Put
\begin{equation}\label{eq:di}
 d_i=\begin{cases}
 2,&i<m,\\
 1,&i=m.
 \end{cases}
\end{equation}
For a reduced word $\mathbf i=(i_1,\ldots,i_L)$ define
\[
 \Gamma_{\mathbf i}^{B,C}(x_1,\ldots,x_L)
       =(d_{i_1}x_1,\ldots,d_{i_L}x_L).
\]
By \cite[Theorem~3.2]{CFL25}, this identifies the type $B_m$ and type $C_m$ string cones for every reduced word of the longest element.

Let $A^B=(a^B_{ij})$ and $A^C=(a^C_{ij})$ be the Cartan matrices with the common numbering. Then
\begin{equation}\label{eq:cartan}
                     a^C_{ij}=\frac{d_i}{d_j}a^B_{ij}.
\end{equation}
At the double edge, \eqref{eq:cartan} exchanges $-1$ and $-2$.

\begin{proposition}\label{prop:weighted}
Let $\widehat{\mathbf i}=(i_1,\ldots,i_M)$ be a reduced word of the longest element and
\[
                    \eta^B=\sum_{i=1}^m c_i\omega_i^{B_m}.
\]
Set
\[
                    \eta^C=\sum_{i=1}^m d_ic_i\omega_i^{C_m}.
\]
Then
\[
 \Gamma_{\widehat{\mathbf i}}^{B,C}
 \bigl(Q_{\widehat{\mathbf i}}^{B_m}(\eta^B)\bigr)
 =
 Q_{\widehat{\mathbf i}}^{C_m}(\eta^C).
\]
If $\mathbf i=(i_1,\ldots,i_L)$ is a prefix of $\widehat{\mathbf i}$, then
\[
 \Gamma_{\mathbf i}^{B,C}
 \bigl(Q_{\mathbf i}^{B_m}(\eta^B)\bigr)
 =
 Q_{\mathbf i}^{C_m}(\eta^C).
\]
\end{proposition}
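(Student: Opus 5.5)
The plan is to write $Q_{\widehat{\mathbf i}}^{X_m}(\eta)$ as the intersection of the string cone $C^{X}_{\widehat{\mathbf i}}$ with the highest-weight inequalities \eqref{eq:littelmann}. By \cite[Theorem~3.2]{CFL25}, $\Gamma=\Gamma_{\widehat{\mathbf i}}^{B,C}$ maps $C^{B}_{\widehat{\mathbf i}}$ onto $C^{C}_{\widehat{\mathbf i}}$. Since $\Gamma$ is a linear bijection, it therefore suffices to show that $x$ satisfies the type $B$ highest-weight inequalities for $\eta^B$ if and only if $y=\Gamma x$ satisfies the type $C$ highest-weight inequalities for $\eta^C$.

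This comparison is carried out inequality by inequality. Substitute $y_j=d_{i_j}x_j$ into the $k$-th type $C$ inequality and use \eqref{eq:cartan} in the form $a^C_{i_k i_j}d_{i_j}=d_{i_k}a^B_{i_k i_j}$. The right-hand side then becomes
\[
 \eta^C_{i_k}-\sum_{j>k}a^C_{i_k i_j}d_{i_j}x_j
 = d_{i_k}\Bigl(c_{i_k}-\sum_{j>k}a^B_{i_k i_j}x_j\Bigr),
\]
where $\eta^C_{i}=\langle\eta^C,\alpha_i^\vee\rangle=d_ic_i$ because the $\omega_i^{C_m}$ are dual to the coroots. The left-hand side is $y_k=d_{i_k}x_k$. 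Dividing by $d_{i_k}>0$ recovers exactly the $k$-th type $B$ inequality with $\eta^B_{i_k}=c_{i_k}$. The only real check is the sign and index convention in \eqref{eq:cartan}: with $a_{ij}=\langle\alpha_j,\alpha_i^\vee\rangle$ and the numbering in which $\alpha_m$ is short in $B_m$ and long in $C_m$, one has $a^B_{m-1,m}=-1$ and $a^C_{m-1,m}=-2$. Hence $a^C_{m-1,m}=\frac{d_{m-1}}{d_m}a^B_{m-1,m}=2\cdot(-1)=-2$ holds, and similarly at $(m,m-1)$. This is the step where a slip would break the argument, so I would verify it explicitly for the double edge. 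This settles the first statement.

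For the prefix statement, use the definition of $Q_{\mathbf i}^{X}(\eta)$ as the projection $\pi$ of $Q_{\widehat{\mathbf i}}^X(\eta)$ onto the first $L$ coordinates. Since $\Gamma_{\widehat{\mathbf i}}^{B,C}$ is diagonal and its first $L$ entries are those of $\Gamma_{\mathbf i}^{B,C}$, we have $\pi\circ\Gamma_{\widehat{\mathbf i}}^{B,C}=\Gamma_{\mathbf i}^{B,C}\circ\pi$. Applying $\pi$ to the first statement then gives the second. The same reduced extension $\widehat{\mathbf i}$ is used in both types; this is legitimate because $W(B_m)=W(C_m)$, and the Demazure string polytope does not depend on the choice of extension \cite{Cal02}.
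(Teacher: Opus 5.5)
Your proposal is correct and follows essentially the same route as the paper. Both arguments invoke \cite[Theorem~3.2]{CFL25} for the cones, transform the highest-weight inequalities \eqref{eq:littelmann} via $y_k=d_{i_k}x_k$, \eqref{eq:cartan} and $\langle\eta^C,\alpha_i^\vee\rangle=d_ic_i$, and obtain the prefix case from the diagonal map commuting with the projection to the first $L$ coordinates. Your two-sided formulation and explicit double-edge check are slightly more careful than the paper's one-directional computation, and the closing remark on independence of the extension is not needed, since the statement fixes $\widehat{\mathbf i}$.
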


\begin{proof}
By \cite[Theorem~3.2]{CFL25}, only the highest-weight inequalities remain. Let $x=(x_1,\ldots,x_M)$ satisfy the type $B_m$ inequalities and set
\[
                         y_k=d_{i_k}x_k.
\]
For the $k$-th inequality in \eqref{eq:littelmann} we obtain
\begin{align*}
 y_k
 &\leq d_{i_k}c_{i_k}
      -\sum_{j>k}d_{i_k}a^B_{i_k,i_j}x_j\\
 &=d_{i_k}c_{i_k}
      -\sum_{j>k}\frac{d_{i_k}}{d_{i_j}}a^B_{i_k,i_j}y_j\\
 &=d_{i_k}c_{i_k}
      -\sum_{j>k}a^C_{i_k,i_j}y_j,
\end{align*}
by \eqref{eq:cartan}. Since
\[
 \langle\eta^C,(\alpha_i^C)^\vee\rangle=d_ic_i,
\]
these are precisely the type $C_m$ highest-weight inequalities.

For the second claim, let $\pi_L$ denote the projection to the first
$L$ coordinates. Since $\Gamma_{\widehat{\mathbf i}}^{B,C}$ is diagonal,
\[
 \pi_L\Gamma_{\widehat{\mathbf i}}^{B,C}
   =\Gamma_{\mathbf i}^{B,C}\pi_L.
\]
The claim follows from the definition of the Demazure string polytopes.
\end{proof}

Apply the proposition to a reduced extension of $\ww_n$ and to
\[
       \widetilde\lambda
          =\sum_{i=1}^n a_i\widetilde\omega_{2i-1}^{B_m}.
\]
Since $2i-1<m$ for $i<n$ and $2n-1=m$, the transformed weight is
\begin{equation}\label{eq:mut}
 \widetilde\mu
  =2\sum_{i=1}^{n-1}a_i\widetilde\omega_{2i-1}^{C_m}
       +a_n\widetilde\omega_m^{C_m}.
\end{equation}
This is the lift of $\mu$ from \eqref{eq:mu}; hence
\begin{equation}\label{eq:BtoC}
 \Gamma_n Q_{\ww_n}^{B_m}(\widetilde\lambda)
       =Q_{\ww_n}^{C_m}(\widetilde\mu),
\end{equation}
where $\Gamma_n:=\Gamma_{\ww_n}^{B,C}$. Note that $\Gamma_n$ is not unimodular: by Lemma~\ref{lem:GammaD} below it rescales $N-n$ of the $N$ coordinates by $2$, hence changes the lattice by the index $2^{N-n}$. This is exactly the factor cancelled by the map $2D_n^{-1}$ of Proposition~\ref{prop:MakhlinC}.

\begin{example}\label{ex:weightsB3}
For $n=3$ the word is
\[
                 \ww_3=(5,4,5,3,4,5,2,3,1).
\]
Let
\[
      \eta^B=a_1\omega_1^{B_5}+a_2\omega_3^{B_5}+a_3\omega_5^{B_5}
\]
and denote the string coordinates by $t_1,\ldots,t_9$. The nine highest-weight inequalities are
\[
\begin{aligned}
 t_1&\leq a_3+2t_2-2t_3+2t_5-2t_6,\\
 t_2&\leq t_3+t_4-2t_5+t_6+t_8,\\
 t_3&\leq a_3+2t_5-2t_6,\\
 t_4&\leq a_2+t_5+t_7-2t_8,\\
 t_5&\leq t_6+t_8, & t_6&\leq a_3,\\
 t_7&\leq t_8+t_9, & t_8&\leq a_2,\qquad t_9\leq a_1.
\end{aligned}
\]
The similarity is
\[
 (x_1,\ldots,x_9)
  =(t_1,2t_2,t_3,2t_4,2t_5,t_6,2t_7,2t_8,2t_9).
\]
After substitution, the inequalities become
\[
\begin{aligned}
 x_1&\leq a_3+x_2-2x_3+x_5-2x_6,\\
 x_2&\leq2x_3+x_4-2x_5+2x_6+x_8,\\
 x_3&\leq a_3+x_5-2x_6,\\
 x_4&\leq2a_2+x_5+x_7-2x_8,\\
 x_5&\leq2x_6+x_8, & x_6&\leq a_3,\\
 x_7&\leq x_8+x_9, & x_8&\leq2a_2,\qquad x_9\leq2a_1.
\end{aligned}
\]
These are exactly the type $C_5$ inequalities for
\[
               2a_1\omega_1^{C_5}+2a_2\omega_3^{C_5}+a_3\omega_5^{C_5}.
\]
\end{example}

\begin{lemma}\label{lem:GammaD}
In the root-coordinate labeling of \cite[Section~4]{CFL24},
\[
                           \Gamma_n=2D_n^{-1}.
\]
\end{lemma}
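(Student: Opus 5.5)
Both sides of the identity are diagonal: $\Gamma_n$ multiplies the $k$-th string coordinate by $d_{i_k}$, and $2D_n^{-1}$ multiplies $e_{a,b}$ by $1$ if $b=\bar a$ and by $2$ otherwise. So it suffices to show that, under the identification of \cite[Notation~4.10]{CFL24}, the string positions $k$ with $i_k=m$ are exactly the labels in $\Delta_n$, i.e.\ the long-root coordinates $e_{a,\bar a}$. Every other position then has $i_k<m$, hence $d_{i_k}=2$, and the two maps agree there as well.

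I will first count the occurrences of the letter $m=2n-1$ in $\ww_n$. Each block $\tau_j=s_j\cdots s_m$, for $n+1\le j\le m$, contains $s_m$ exactly once, which gives $n-1$ occurrences. Among the $\sigma_j=s_j\cdots s_{2j-1}$, only $\sigma_n$ reaches $2n-1=m$, contributing one more. So $s_m$ appears exactly $n=|\Delta_n|$ times. This is consistent with the index count $2^{N-n}$ mentioned before the lemma, and with Example~\ref{ex:weightsB3}, where positions $1,3,6$ carry the letter $5$.

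Next I will match these positions with the labels $e_{a,\bar a}$. I will use the explicit bijection of \cite[Notation~4.10]{CFL24} between positions of $\ww_n$ and pairs $(a,b)$. I expect the last letter of the $a$-th block, reading the word as $\tau_m,\ldots,\tau_{n+1},\sigma_n$, to be labelled $e_{a,\bar a}$: each block is a run of labels $e_{a,b}$ with $a$ fixed and $b$ increasing through the alphabet up to $\bar a$. This is the main obstacle, since it depends on the precise convention in \cite{CFL24} and cannot be derived from the word alone. To pin it down, I will cross-check against \eqref{eq:Xdiag} and Lemma~\ref{lem:translation}. Indeed, $t_{n,\omega_n}$ has coefficient $1$ exactly on $\Delta_n$; the string inequality $x_k\le\widetilde\mu_{i_k}-\dots$ for $\widetilde\mu=\widetilde\omega_m$ has a nonzero constant only at positions with $i_k=m$. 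This forces the $\Delta_n$-labels to sit on letters $m$.

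I will also verify the matching against Example~\ref{ex:weightsB3} for $n=3$. There the $\sigma$-blocks $\sigma_2=(2,3)$ and $\sigma_1=(1)$ contain no $5$, in agreement with their labels having $b\ne\bar a$. With the position set identified, comparing the diagonal entries gives $\Gamma_n=2D_n^{-1}$.
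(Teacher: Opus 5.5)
Your reduction is the paper's own: both maps are diagonal in the common labeling, so the lemma is equivalent to the statement that the positions of $\ww_n$ carrying the letter $m$ are exactly the labels $e_{a,\bar a}$. The paper's entire proof is to read this off from \cite[Notation~4.10]{CFL24}, namely $\iota(a,b)=m\iff b=\bar a$. Your count of $n$ occurrences of $s_m$ is correct, but it only shows that the two sets have the same size. You never actually establish the identification, and the devices you offer to ``pin it down'' do not work.

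\textbf{The expected block structure.} It is impossible. In your ordering the $a$-th block has $a$ letters, whereas exactly $2(n-a)+1$ labels have first index $a$. For $a=n$, the block $\sigma_n$ has $n\geq 2$ letters, but $e_{n,\bar n}$ is the only label with first index $n$. So the blocks cannot be runs of labels $e_{a,b}$ with $a$ fixed.

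\textbf{The cross-check via $t_{n,\omega_n}$.} This is a non sequitur. The vector $t_{n,\omega_n}=T_{n,\omega_n}(0)$ is just one point of $Q_{\ww_n}^{C_m}(\widetilde\omega_m)$. The pattern of its coordinates (equal to $1$ on $\Delta_n$, equal to $2$ at the other non-zero places) says nothing about which defining inequalities carry a non-zero constant term. A coordinate can equal $1$ at a position whose inequality is homogeneous, because the later coordinates can make the right-hand side positive; conversely, a letter $m$ does not force the value $1$. Moreover, the inequalities of a Demazure string polytope are those of a projection, not simply \eqref{eq:littelmann} restricted to the prefix. Finally, Lemma~\ref{lem:translation} and \eqref{eq:Xdiag} are themselves read off from \cite{CFL24} in the very labeling you are trying to determine, so they are not independent evidence.

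\textbf{The example check.} It is circular, since Example~\ref{ex:weightsB3} displays no labels.

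\textbf{The fix.} State the cited fact $\iota(a,b)=m\iff b=\bar a$ from \cite[Notation~4.10]{CFL24} directly. With it, your diagonal comparison finishes the proof exactly as in the paper.
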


\begin{proof}
Let $\iota(a,b)$ be the simple-reflection index of $e_{a,b}$ in \cite{CFL24}. By \cite[Notation~4.10]{CFL24},
\[
                   \iota(a,b)=m
                   \quad\Longleftrightarrow\quad
                   b=\bar a.
\]
Thus the $f_m$-coordinates are exactly
\[
                   e_{1,\bar1},\ldots,e_{n,\bar n}.
\]
Hence, by \eqref{eq:di} and the definition of $\Gamma_n$,
\[
 \Gamma_n(e_{a,b})=
 \begin{cases}
   e_{a,b},&b=\bar a,\\
   2e_{a,b},&b\neq\bar a.
 \end{cases}
\]
This equals $2D_n^{-1}(e_{a,b})$ by \eqref{eq:D}.
\end{proof}

\section{Proof of the main result}\label{sec:proof}

Let $q\in P_{\Mak}^{B_n}(\lambda)$. By Proposition~\ref{prop:MakhlinC},
\[
                   2D_n^{-1}q\in P_{\FFLV}^{C_n}(\mu).
\]
Theorem~\ref{thm:CFL} gives
\[
       X_n(2D_n^{-1}q)+t_{n,\mu}
          \in Q_{\ww_n}^{C_m}(\widetilde\mu).
\]
Applying $\Gamma_n^{-1}$ and Lemma~\ref{lem:GammaD} gives
\begin{equation}\label{eq:Phi}
 \begin{split}
 \Phi_{n,\lambda}(q)
 &=\Gamma_n^{-1}\bigl(X_n(2D_n^{-1}q)+t_{n,\mu}\bigr)\\
 &=D_nX_nD_n^{-1}q+\frac12D_nt_{n,\mu}.
 \end{split}
\end{equation}
Set
\begin{equation}\label{eq:Uc}
              U_n=D_nX_nD_n^{-1},
       \qquad c_\lambda=\frac12D_nt_{n,\mu}.
\end{equation}

It remains to check integrality.

\begin{proposition}\label{prop:U}
One has $U_n\in GL_N(\ZZ)$.
\end{proposition}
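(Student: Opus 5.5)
We need $U_n=D_nX_nD_n^{-1}$ to be integral with determinant $\pm1$. The determinant is immediate: $\det U_n=\det D_n\cdot\det X_n\cdot\det D_n^{-1}=\det X_n=\pm1$ by \cite[Proposition~4.4]{CFL24}. So the whole point is integrality of the entries; once this holds, $U_n^{-1}$ has integral entries too, because the inverse of an integral matrix with determinant $\pm1$ is integral.

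For integrality we compute entries in the basis $e_{a,b}$. Since $D_n$ is diagonal with entry $2$ on $\Delta_n$ and $1$ elsewhere, the matrix of $U_n$ is obtained from that of $X_n$ by rescaling. Write $(X_n)_{\beta,\gamma}$ for the coefficient of $e_\beta$ in $X_n(e_\gamma)$ and $\delta_\beta\in\{1,2\}$ for the diagonal entry of $D_n$. Then
\[
 (U_n)_{\beta,\gamma}=\frac{\delta_\beta}{\delta_\gamma}(X_n)_{\beta,\gamma}.
\]
Only the case $\delta_\beta=1$, $\delta_\gamma=2$ can introduce a denominator. That is, we must check that for every $\gamma=(a,\bar a)\in\Delta_n$, the coefficients of $X_n(e_{a,\bar a})$ on coordinates outside $\Delta_n$ are even.

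This is exactly what \eqref{eq:Xdiag} provides:
\[
 X_n(e_{a,\bar a})=-e_{a,\bar a}-2\sum_{c<a}e_{c,\bar a}.
\]
For $c<a$ we have $\bar a\neq\bar c$, so the $e_{c,\bar a}$ lie off $\Delta_n$ and all carry coefficient $-2$. The only coefficient of odd value is on $e_{a,\bar a}\in\Delta_n$ itself, where the rescaling factor is $2/2=1$. Hence column $\gamma\in\Delta_n$ of $U_n$ equals $-e_{a,\bar a}-\sum_{c<a}e_{c,\bar a}$, which is integral.

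The other columns are integral automatically. For $\gamma\notin\Delta_n$ the factor is $\delta_\beta/1\in\{1,2\}$, which only doubles entries on $\Delta_n$-rows and so preserves integrality.

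The only real obstacle is making sure \eqref{eq:Xdiag} is the complete image of $e_{a,\bar a}$ under \cite[Definition~4.1]{CFL24}, with no further summands. I would re-read that definition, using the labeling of \cite[Notation~4.10]{CFL24} that also underlies Lemma~\ref{lem:GammaD}, to confirm that no other odd off-$\Delta_n$ terms occur in the $\Delta_n$-columns. Granting this, $U_n\in GL_N(\ZZ)$ follows.
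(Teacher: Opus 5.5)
Your proposal is correct and is essentially the paper's proof: the only possible denominators sit in the columns indexed by $\Delta_n$, and \eqref{eq:Xdiag} shows that these columns of $U_n$ equal $-e_{a,\bar a}-\sum_{c<a}e_{c,\bar a}$, while $\det U_n=\det X_n=\pm1$ follows by conjugacy. The paper likewise takes \eqref{eq:Xdiag} from \cite[Definition~4.1]{CFL24} as the complete image of $e_{a,\bar a}$, so the check you flag is exactly the input it relies on.
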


\begin{proof}
Only the columns indexed by $e_{a,\bar a}$ may contain denominators. By \eqref{eq:Xdiag},
\[
 \begin{split}
 U_n(e_{a,\bar a})
 &=D_nX_n\left(\frac12e_{a,\bar a}\right)\\
 &=-D_n\left(\frac12e_{a,\bar a}+\sum_{c<a}e_{c,\bar a}\right)\\
 &=-e_{a,\bar a}-\sum_{c<a}e_{c,\bar a}.
 \end{split}
\]
Hence these columns are integral. On the remaining basis vectors $D_n^{-1}$ acts trivially, so $U_n\in M_N(\ZZ)$.

Since $U_n$ is conjugate to $X_n$,
\[
                 \det U_n=\det X_n=\pm1,
\]
so $U_n\in GL_N(\ZZ)$.
\end{proof}

\begin{proposition}\label{prop:c}
For every dominant integral weight $\lambda$ of type $B_n$, the vector $c_\lambda$ belongs to $\ZZ^N$. Moreover
\[
                         c_{\lambda+\nu}=c_\lambda+c_\nu.
\]
\end{proposition}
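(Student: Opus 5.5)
Additivity comes first and is immediate. By Theorem~\ref{thm:CFL}, $t_{n,\mu}$ is linear in $\mu$, and by \eqref{eq:mu} the assignment $\lambda\mapsto\mu$ is additive (it is $\mu=2\lambda$ in $\eps$-coordinates). Hence $\lambda\mapsto c_\lambda=\frac12D_nt_{n,\mu}$ is additive, so $c_{\lambda+\nu}=c_\lambda+c_\nu$. Integrality therefore reduces to the generators $\lambda=\omega_i^{B_n}$, $1\le i\le n$, since dominant integral weights are nonnegative integer combinations of these.

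For $i<n$ the weight $\omega_i^{B_n}$ corresponds to $\mu=2\omega_i^{C_n}$, so $t_{n,\mu}=2t_{n,\omega_i}$. This vector lies in $2\ZZ^N$ because $t_{n,\omega_i}\in\ZZ^N$ by Theorem~\ref{thm:CFL}. Since $D_n$ has integer entries, $c_{\omega_i}=D_nt_{n,\omega_i}\in\ZZ^N$.

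The spin weight $\lambda=\omega_n^{B_n}$ is the only real case, with $\mu=\omega_n^{C_n}$ and $c_{\omega_n}=\frac12D_nt_{n,\omega_n}$. Here I use Lemma~\ref{lem:translation}: on the coordinates $e_{a,\bar a}\in\Delta_n$ the entries of $t_{n,\omega_n}$ equal $1$, and every other nonzero entry equals $2$.
\begin{itemize}
\item By \eqref{eq:D}, $D_n$ doubles exactly the $\Delta_n$-coordinates, so $D_nt_{n,\omega_n}$ has entry $2$ on each $e_{a,\bar a}$.
\item Off $\Delta_n$, $D_n$ acts as the identity, so the entries stay in $\{0,2\}$.
\end{itemize}
So $D_nt_{n,\omega_n}\in 2\ZZ^N$ and $c_{\omega_n}\in\{0,1\}^N\subset\ZZ^N$.

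The only step needing care is this spin case. It rests entirely on the parity pattern in Lemma~\ref{lem:translation}: the odd entries of $t_{n,\omega_n}$ sit exactly on $\Delta_n$, which are exactly the coordinates that $D_n$ doubles. One should also check that Lemma~\ref{lem:translation} uses the same labeling of coordinates as $D_n$, namely the identification of \cite[Notation~4.10]{CFL24}. This holds by the convention fixed in Section~\ref{sec:preliminaries}.
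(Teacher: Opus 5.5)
Your proof is correct and is essentially the paper's argument: you expand $c_\lambda$ using the linearity of $t_{n,\mu}$ together with \eqref{eq:mu}, let the factor $2$ cancel the $\frac12$ for $i<n$, and use Lemma~\ref{lem:translation} to get $D_nt_{n,\omega_n}\in2\ZZ^N$ in the spin case. Your remark that Lemma~\ref{lem:translation} and $D_n$ must use the same coordinate labeling is a reasonable check, and the paper covers it by the identification of coordinates fixed in Section~\ref{sec:preliminaries}.
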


\begin{proof}
By the linearity of the translation vector in \cite{CFL24} and \eqref{eq:mu},
\[
 t_{n,\mu}=2\sum_{i=1}^{n-1}a_it_{n,\omega_i}+a_nt_{n,\omega_n}.
\]
Hence
\[
 c_\lambda=\sum_{i=1}^{n-1}a_iD_nt_{n,\omega_i}
             +\frac{a_n}{2}D_nt_{n,\omega_n}.
\]
Lemma~\ref{lem:translation} gives $D_nt_{n,\omega_n}\in2\ZZ^N$, so $c_\lambda$ is integral. Additivity is immediate.
\end{proof}

\begin{proof}[Proof of Theorem~\ref{thm:main}]
Proposition~\ref{prop:MakhlinC}, Theorem~\ref{thm:CFL} and Proposition~\ref{prop:weighted} show that \eqref{eq:Phi} is an affine bijection
\[
 P_{\Mak}^{B_n}(\lambda)
       \longrightarrow Q_{\ww_n}^{B_m}(\widetilde\lambda).
\]
Propositions~\ref{prop:U} and \ref{prop:c} give unimodularity. Additivity follows since $U_n$ is independent of $\lambda$ and $c_\lambda$ is additive.
\end{proof}

\begin{remark}\label{rem:cancellation}
Lemma~\ref{lem:GammaD} gives the cancellation of the two diagonal rescalings; the linear part is $D_nX_nD_n^{-1}$.
\end{remark}

\section{Lattice points and semigroups}\label{sec:semigroup}

Let
\[
 S_{\Mak}(\lambda)=P_{\Mak}^{B_n}(\lambda)\cap\ZZ^N,
 \qquad
 S_{\Str}(\widetilde\lambda)
 =Q_{\ww_n}^{B_m}(\widetilde\lambda)\cap\ZZ^N.
\]
The sets $S_{\Mak}(\lambda)$ and $S_{\Str}(\widetilde\lambda)$ parametrize Makhlin's monomial basis \cite[Theorem~1.1 and Corollary~2.2]{Mak19} and the Demazure string basis \cite{Lit98,BZ01,Cal02}, respectively. Theorem~\ref{thm:main} gives
\begin{equation}\label{eq:lattice}
         \Phi_{n,\lambda}\bigl(S_{\Mak}(\lambda)\bigr)
                  =S_{\Str}(\widetilde\lambda).
\end{equation}

Since $U_n$ is independent of $\lambda$ and $c_\lambda$ is additive,
\begin{equation}\label{eq:additive}
 \Phi_{n,\lambda+\nu}(x+y)
       =\Phi_{n,\lambda}(x)+\Phi_{n,\nu}(y).
\end{equation}
Define
\[
 \Sigma_{\Mak}
   =\{(\lambda,x)\mid \lambda\in P^+(B_n),\ x\in S_{\Mak}(\lambda)\}
\]
and
\[
 \Sigma_{\Str}
   =\{(\lambda,y)\mid \lambda\in P^+(B_n),\
          y\in S_{\Str}(\widetilde\lambda)\}.
\]
Both are indeed semigroups: the cone parts of $P_{\Mak}^{B_n}(\lambda)$ and of
$Q_{\ww_n}^{B_m}(\widetilde\lambda)$ do not depend on the weight, and the remaining
bounds, $M_B(\lambda,\mathbf d)$ in Section~\ref{sec:preliminaries} resp.\ the
right hand sides of \eqref{eq:littelmann}, are linear in it. Since
$\widetilde\lambda+\widetilde\nu=\widetilde{\lambda+\nu}$, this gives
\[
 S_{\Mak}(\lambda)+S_{\Mak}(\nu)\subseteq S_{\Mak}(\lambda+\nu),
 \qquad
 S_{\Str}(\widetilde\lambda)+S_{\Str}(\widetilde\nu)
      \subseteq S_{\Str}(\widetilde{\lambda+\nu}).
\]
\begin{corollary}\label{cor:semigroup}
The map
\[
        (\lambda,x)\longmapsto(\lambda,\Phi_{n,\lambda}(x))
\]
is an isomorphism of semigroups $\Sigma_{\Mak}\simeq\Sigma_{\Str}$.
\end{corollary}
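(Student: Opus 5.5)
The map in Corollary~\ref{cor:semigroup} is written $\Psi(\lambda,x)=(\lambda,\Phi_{n,\lambda}(x))$. We check four things in order: it is well defined into $\Sigma_{\Str}$, it is a homomorphism, it is injective, and it is surjective.

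\emph{Well-definedness.} For $(\lambda,x)\in\Sigma_{\Mak}$ we have $x\in S_{\Mak}(\lambda)$. By \eqref{eq:lattice}, which comes from Theorem~\ref{thm:main} together with the integrality statements Propositions~\ref{prop:U} and~\ref{prop:c}, we get $\Phi_{n,\lambda}(x)\in S_{\Str}(\widetilde\lambda)$. Hence $(\lambda,\Phi_{n,\lambda}(x))\in\Sigma_{\Str}$.

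\emph{Homomorphism.} The semigroup operation on both sides is componentwise addition. For $(\lambda,x),(\nu,y)\in\Sigma_{\Mak}$, the additivity \eqref{eq:additive} gives
\[
\Psi\bigl((\lambda,x)+(\nu,y)\bigr)=\bigl(\lambda+\nu,\Phi_{n,\lambda+\nu}(x+y)\bigr)=\bigl(\lambda+\nu,\Phi_{n,\lambda}(x)+\Phi_{n,\nu}(y)\bigr)=\Psi(\lambda,x)+\Psi(\nu,y).
\]
Here the sum $(\lambda+\nu,x+y)$ lies in $\Sigma_{\Mak}$ by the inclusion $S_{\Mak}(\lambda)+S_{\Mak}(\nu)\subseteq S_{\Mak}(\lambda+\nu)$. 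Identity \eqref{eq:additive} itself follows from $\Phi_{n,\lambda}(x)=U_nx+c_\lambda$, the independence of $U_n$ from $\lambda$, and Proposition~\ref{prop:c}. The zero element is also preserved: $c_0=0$, because $t_{n,\mu}$ is linear in $\mu$.

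\emph{Bijectivity.} We argue fibrewise over $\lambda$, since $\Psi$ preserves the first coordinate. On the fibre over $\lambda$, the map $\Phi_{n,\lambda}=U_n(\cdot)+c_\lambda$ is injective because $U_n\in GL_N(\ZZ)$. By \eqref{eq:lattice} its image on $S_{\Mak}(\lambda)$ is all of $S_{\Str}(\widetilde\lambda)$. The inverse is therefore $(\lambda,y)\mapsto(\lambda,U_n^{-1}(y-c_\lambda))$. This inverse is integral and automatically additive, since it inverts an additive bijection.

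\emph{Main obstacle.} The only substantive input is the surjectivity on lattice points, \eqref{eq:lattice}. A bijection of polytopes only maps lattice points onto lattice points if both the map and its inverse are integral. The key is that $\Phi_{n,\lambda}$ is an affine map with linear part in $GL_N(\ZZ)$ and integral translation, so it restricts to a bijection of $\ZZ^N$ onto itself. Once that is granted, everything else in the argument is formal.
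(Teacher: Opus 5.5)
Your proposal is correct and is essentially the paper's proof: bijectivity on each homogeneous component $\{\lambda\}\times S_{\Mak}(\lambda)$ comes from \eqref{eq:lattice}, compatibility with addition is \eqref{eq:additive}, and your checks of well-definedness, $c_0=0$ and the explicit inverse are harmless elaborations. One small caveat about your last paragraph: the argument only needs the sufficient direction, namely that an affine map with linear part in $GL_N(\ZZ)$ and integral translation permutes $\ZZ^N$; the ``only if'' you state is false for arbitrary polytopes, but it plays no role in the proof.
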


\begin{proof}
Bijectivity on each homogeneous component follows from \eqref{eq:lattice}; compatibility with addition is \eqref{eq:additive}.
\end{proof}

Let $\lambda$ be neither fundamental nor $2\omega_n$, and let $l=\min\{i:a_i\neq0\}$. By \cite[Lemma~5.1]{Mak19}, every lattice point splits off a point of $S_{\Mak}(\omega_l)$ for $l<n$, and of $S_{\Mak}(2\omega_n)$ for $l=n$. Hence $\Sigma_{\Mak}$ is generated in degrees
\[
                       \omega_1,\ldots,\omega_n,\ 2\omega_n.
\]
Corollary~\ref{cor:semigroup} gives the same for $\Sigma_{\Str}$.

\begin{corollary}\label{cor:generators}
The semigroup $\Sigma_{\Str}$ is generated by its homogeneous components of degrees
\[
                         \omega_1,\ldots,\omega_n,\ 2\omega_n.
\]
Equivalently, the corresponding lattice-point sets are
\[
 S_{\Str}(\widetilde\omega_1),\,
 S_{\Str}(\widetilde\omega_3),\ldots,\,
 S_{\Str}(\widetilde\omega_{2n-1}),\,
 S_{\Str}(2\widetilde\omega_{2n-1}).
\]
\end{corollary}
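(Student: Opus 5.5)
The statement to prove is Corollary~\ref{cor:generators}. I would derive it directly from Corollary~\ref{cor:semigroup} together with Makhlin's generation result for $\Sigma_{\Mak}$.

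The first step is to record the generation statement on the Makhlin side. Take $\lambda=\sum a_i\omega_i$ that is neither fundamental nor $2\omega_n$, and let $l=\min\{i:a_i\neq0\}$. Set $\nu=\omega_l$ if $l<n$ and $\nu=2\omega_n$ if $l=n$. By \cite[Lemma~5.1]{Mak19}, every $x\in S_{\Mak}(\lambda)$ can be written as $x=x'+x''$ with $x'\in S_{\Mak}(\nu)$ and $x''\in S_{\Mak}(\lambda-\nu)$. Here $\lambda-\nu$ is again dominant: if $l<n$ this is because $a_l\ge1$. If $l=n$ then $\lambda=a_n\omega_n$ with $a_n\ge3$, since the cases $a_n=1$ and $a_n=2$ are excluded, so $\lambda-\nu$ is dominant as well. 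I would induct on $\sum_i a_i$, noting that it drops by $1$ or $2$ at each step. This shows that every element $(\lambda,x)$ of $\Sigma_{\Mak}$ is a finite sum of elements whose degrees lie in $\{\omega_1,\dots,\omega_n,2\omega_n\}$.

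The second step is to transport this through the isomorphism $(\lambda,x)\mapsto(\lambda,\Phi_{n,\lambda}(x))$ of Corollary~\ref{cor:semigroup}. Given $(\lambda,y)\in\Sigma_{\Str}$, \eqref{eq:lattice} provides a unique $x$ with $\Phi_{n,\lambda}(x)=y$. I would decompose $(\lambda,x)$ as in the first step and apply \eqref{eq:additive} repeatedly to get $y=\sum\Phi_{n,\nu_k}(x_k)$, where $\nu_k\in\{\omega_1,\dots,\omega_n,2\omega_n\}$ and each $\Phi_{n,\nu_k}(x_k)$ lies in $S_{\Str}(\widetilde\nu_k)$. This proves the first formulation.

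For the equivalent formulation I only need the definition of the tilde lift: $\widetilde{\omega_i}=\widetilde\omega_{2i-1}$, and $\widetilde{2\omega_n}=2\widetilde\omega_{2n-1}$ by additivity of $\lambda\mapsto\widetilde\lambda$. This turns the listed degrees into the listed sets.

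I expect the main obstacle to be the bookkeeping in the first step, namely that the splitting in \cite[Lemma~5.1]{Mak19} really lands in lattice points of the required degrees and that the induction terminates. The exceptional degrees $\omega_i$ and $2\omega_n$ are base cases, so they need no splitting. The case $\lambda=\omega_n$ is itself fundamental, so it is also a base case. Everything else is formal once Corollary~\ref{cor:semigroup} is available.
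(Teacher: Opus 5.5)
Your proposal is correct and follows the paper's argument. The paper likewise uses \cite[Lemma~5.1]{Mak19} to show that $\Sigma_{\Mak}$ is generated in degrees $\omega_1,\ldots,\omega_n,2\omega_n$, and then transports this through the isomorphism of Corollary~\ref{cor:semigroup}. Your induction on $\sum_i a_i$, the dominance check for $\lambda-\nu$ (including $a_n\geq 3$ when $l=n$), and the identification $\widetilde{2\omega_n}=2\widetilde\omega_{2n-1}$ make explicit the bookkeeping that the paper leaves implicit.
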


Although
\[
 P_{\Mak}^{B_n}(2\omega_n)=2P_{\Mak}^{B_n}(\omega_n),
\]
the degree $2\omega_n$ occurs in the decomposition of \cite[Lemma~5.1]{Mak19}.

\printbibliography

\end{document}